\numberwithin{equation}{section}
\theoremstyle{plain}
\newtheorem{theorem}{Theorem}
\newtheorem{prop}[theorem]{Proposition}
\newtheorem{lemma}[theorem]{Lemma}
\newtheorem{conj}[theorem]{Conjecture}
\theoremstyle{definition}
\newtheorem{rema}[theorem]{Remark}
\newcommand{\F}{\ensuremath{\mathbb F}}
\newcommand{\Z}{\ensuremath{\mathbb Z}}
\def\la{\langle}
\def\ra{\rangle}
\newcommand{\len}{\mathop{\mathrm{len}}\nolimits}
\newcommand{\lcm}{\mathop{\mathrm{lcm}}}
\begin{document}

\title[Homogeneous monoids of linear growth]{Homogeneous finitely presented monoids of linear growth}

\author{Dmitri Piontkovski}

      \address{Department of Mathematics for Economics\\
 National Research University Higher School of Economics\\
Myasnitskaya str. 20\\
Moscow 101990\\
 Russia
}
\thanks{The article was prepared within the framework of the Academic Fund Program at the National Research University Higher School of Economics (HSE) in 2017--2018 (grant 17-01-0006) and by the Russian Academic Excellence Project ``5--100''.}

\email{dpiontkovski@hse.ru}

\subjclass{20M05}

\keywords{Homogeneous monoid, linear growth, finitely presented semigroup}


 \begin{abstract}
 If a finitely generated monoid $M$ is defined by a finite number of degree-preserving relations, then it has linear growth if and only if it can be decomposed into a finite disjoint union of subsets (which we call ``sandwiches'') of the form $a\la w \ra  b$ where $a,b,w \in M$ and $\la w \ra $ denotes the monogenic semigroup generated by $w$. Moreover, the decomposition can be chosen in such a way that the sandwiches are either singletons or ``free'' ones (meaning that all elements $aw^nb$ in each sandwich are pairwise different). So, the minimal number of free sandwiches in such a decompositions becomes a new numerical invariant of a homogeneous (and conjecturally, non-homogeneous) finitely presented monoid of linear growth. 
 \end{abstract}

\maketitle



If a semigroup is a disjoint union of a finite number of free monogenic subsemigroups, then it is finitely presented and residually finite~\cite{ar} and has linear growth~\cite{ae}. It is easy to see that the reverse implication does not hold. For example, the monoid with zero $M =\la x,y | xy=0, xx=0 \ra $ is finitely presented with monomial relations (hence, residually finite) and has linear growth. However, $M$ cannot be represented as a finite union of monogenic semigroups since it contains an infinite set $\{ y^nx | n\ge 0\}$ of nilpotent elements. 

Let us call  a monoid $S$ {\em homogeneous} if its relations are degree-preserving with respect to some weight function, that is,  for some set of generators $X$ of $S$ there is a function 
$d:X\to \Z_{>0}$ such that all relations  of $S$ have either the form $w = 0$ (if $S$ contains zero) or $w=u$ with $d(w) = d(u)$, where for a  word $w = x_1 \dots x_k$ (resp., for a word $u$) on the generators we define $d(w)$ to be the sum
$d(x_1) + \dots +d(x_k)$. In particular, any monoid defined by the relations of the form $u=0$ or $u=w$ where the words $u$ and $w$ have the same length  
is homogeneous with $d(x) =1$ for all $x\in X$. 

Given three elements $a,b$ and $w$ of a semigroup $S$, we call the subset $a\la w\ra b = \{ aw^n b | n\ge 0\}$  {\em sandwich}. For example, each singleton $\{ a\}$ is the sandwich $a\la 1 \ra 1 $.  A sandwich 
$a\la w\ra b$ is called {\em free} if its elements $aw^n b$ are pairwise different for all $n\ge 0$. For example, in free monoids all sandwiches containing  two or more  elements are free.

\begin{theorem}
\label{th:1}
Suppose that a monoid $S$ is homogeneous and finitely presented. Then the following conditions are equivalent.

(i) $S$ has at most linear growth;

(ii)$S$ is a finite union of 
sandwiches;

(iii) $S$ is a union of a finite subset and a finite disjoint union of free sandwiches.
\end{theorem}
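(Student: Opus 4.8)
The plan is to run the cycle of implications, disposing of (iii)$\Rightarrow$(ii)$\Rightarrow$(i) quickly and then concentrating on (i)$\Rightarrow$(iii). The implication (iii)$\Rightarrow$(ii) is immediate, since each element of the finite subset is the singleton sandwich $a\langle 1\rangle 1$ and free sandwiches are sandwiches. For (ii)$\Rightarrow$(i) I would bound the growth of a single sandwich. Homogeneity makes the weight $d$ descend to a well-defined degree on $S$, and since every generator has weight at least $1$ the length $\ell(s)$ and the degree $d(s)$ of an element obey $\ell(s)\le d(s)\le d_{\max}\,\ell(s)$; so at most linear growth in degree and in length are equivalent. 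As the element $aw^{n}b$ has degree $d(a)+n\,d(w)+d(b)$, each sandwich meets every degree at most once, contributing $O(N)$ elements of degree at most $N$, and a finite union of sandwiches therefore has at most linear growth.

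For the substantial implication (i)$\Rightarrow$(iii) I would move to a rewriting description. Fix the finite generating set $X$ and a weight-compatible admissible order on the free monoid $X^{*}$, and complete the given finite homogeneous presentation to a (possibly infinite) Gr\"obner--Shirshov basis; let $W\subseteq X^{*}$ be the resulting set of normal words, those containing no leading word of a relation as a factor. Since the relations preserve $d$, the natural map $W\to S$ is a degree-preserving bijection onto the nonzero elements, so condition (i) is precisely the statement that the language $W$ has at most linear growth inside $X^{*}$.

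The heart of the proof is to show that linear growth of $W$, together with finiteness of the presentation, forces the normal words to be eventually periodic. Let $m$ bound the lengths of the leading words of the relations that are relevant to reduction; arguing that linear growth keeps this overlap data finite even when the completed basis is infinite is the step I expect to be the main obstacle, precisely because the presentation need not be monomial. Granting it, I would build the Ufnarovski-type automaton whose states are the normal words of length $m$ and whose edges record the legal one-letter right extensions, so that the long normal words are exactly the labels of long paths in this finite graph. In a finite graph the number of paths of a given length is linear in the length exactly when no single path passes through two distinct cycles; otherwise one chains two cycles and produces at least quadratic growth. Under this dichotomy every sufficiently long path is a bounded lead-in followed by iterations of one cycle $\beta$ followed by a bounded lead-out, since acyclic stretches in a finite graph have bounded length. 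Hence, outside a finite set of short words, $W$ is a finite union of families $\alpha\beta^{*}\gamma$ with $\alpha,\beta,\gamma\in X^{*}$.

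It remains to transport this back to $S$ and to arrange disjointness and freeness. Each periodic family $\{\alpha\beta^{n}\gamma : n\ge 0\}$ of normal words maps to the sandwich $a\langle w\rangle b$ with $a=\alpha,\ w=\beta,\ b=\gamma$, and this sandwich is free because distinct normal words represent distinct elements of $S$; the remaining short words constitute the finite subset. To reach the disjoint union required by (iii) I would finally clean up the finitely many families by splitting and intersecting the underlying arithmetic progressions of exponents, a routine if bookkeeping-heavy manipulation of periodic subsets. Once the finiteness of the relevant rewriting data is secured, all three conditions are equivalent.
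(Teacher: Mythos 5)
Your implications (iii)$\Rightarrow$(ii) and (ii)$\Rightarrow$(i) are correct and essentially the same as the paper's, and your final clean-up step (splitting and intersecting arithmetic progressions of exponents to make the finitely many families $\alpha\beta^{*}\gamma$ disjoint) is exactly what the paper isolates and proves as Lemma~\ref{lem:1}; that part is indeed elementary bookkeeping. The genuine gap is the step you yourself defer with ``granting it'': the claim that linear growth plus finiteness of the original presentation yields a bound $m$ on the reduction data after completion. If the completed Gr\"obner--Shirshov basis is infinite, no such $m$ exists a priori: normality of a word is then not decided by its factors of length at most $m$, so your graph whose vertices are normal words of length $m$ will accept arbitrarily long words that contain long obstructions and hence are not normal, and the identification of long normal words with paths in a finite graph collapses. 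What you actually need is that the set of normal words of $A=\F_2 S$ is a regular language, i.e.\ that $A$ is automaton in Ufnarovski's sense.

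This is not something that can be extracted from formal rewriting combinatorics: it is exactly Ufnarovski's conjecture for the algebra $A$, and that conjecture is known to \emph{fail} for finitely presented homogeneous algebras of linear growth over some infinite fields, so any proof must exploit finiteness of the coefficient field (or some comparably non-formal input) rather than overlap-boundedness arguments. The paper's proof of (i)$\Rightarrow$(ii) consists precisely of invoking the main result of \cite{p17} --- either \cite[Theorem~3.1]{p17}, that a finitely presented connected graded algebra of linear growth over a finite field is automaton, followed by the Paun--Salomaa description of slender regular languages \cite[Theorem~3.3]{slender}, or \cite[Corollary~2.3]{p17}, which hands over the sandwich basis $\{aw^{n}b \mid (a,b,w)\in Q\}$ directly --- and that result is tied to the dynamical Mordell--Lang problem in positive characteristic, as the title of \cite{p17} indicates. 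So your architecture (normal words, finite automaton, eventually periodic families, disjointness clean-up) matches the paper's, but the single step you leave open is the entire mathematical content of the theorem, and the route you propose for it would not close the gap.
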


We refer to the last decomposition as {\em sandwich decomposition}. For example, a sandwich decomposition
of 
 the above monoid $M$ consists of  the finite set $\{0, 1 \}$  and two free sandwiches
$1 \la y \ra x $ and $1 \la y \ra 1 = \la y \ra $.

\begin{proof}
The implication (ii)$\Longrightarrow$(i) is straightforward since in each sandwich $a \la w \ra b$ the number of words $u$ of length $\len u \le n$ 
is not greater than 
$$
\frac{n-\len(a)-\len(b)}{\len(w)} = O(n). 
$$ 
The implication (iii)$\Longrightarrow$(ii) is trivial since any finite set is a finite union of singletons which are trivial sandwiches.

To complete the prove, let us prove  the implications (i)$\Longrightarrow$(ii) and 
(i)\&(ii)$\Longrightarrow$(iii).
Let $A = \F_2 S$ be the semigroup algebra (with common zero, if $S$ contains zero) over the two-element filed. It is $\Z$-graded connected, finitely presented, and has linear growth.
By~\cite[Theorem~3.1]{p17}, it is automaton in the sense of Ufnarovski with respect to any homogeneous finite set of generators. In particular,  $A$ is automaton in the sense of Ufnarovski 
with respect to a minimal set of  generators of $S$. Then the set of normal words in $A$ form a regular language. 
Now, the theorem   follows from a theorem by 
Paun and Salomaa~\cite[Theorem~3.3]{slender} which describes slender regular languages. 

Let us give also another proof which does not use  methods of the theory of finite automata.
  By~\cite[Corollary~2.3]{p17}, it follows that there 
exists  a finite generating set $X$ of $S$ containing the unit of $S$ and a subset $Q \subset X\times X\times X$
such that the set 
$$Y = 
\{ a w^n b | n\ge 0 , (a,b,w) \in Q \} 
$$
form a linear basis of $A$ (moreover, it is the set of normal words of $A$). It follows that either $S = Y \cup\{0\}$ (if $S$ contains zero) or $S = Y$. Since $Y$ is the union of sandwiches $a\la w\ra b$ for $(a,b,w) \in Q$, we get the implication (i)$\Longrightarrow$(ii). 

It remains to show that the set of words $Y$ is a finite disjoint union of  sandwiches (since $Y$ is a subset of the free monoid $\la X \ra$, all these sandwichas are either free or singletons).  
To apply the induction argument, it is sufficient to use the next lemma. 
\end{proof}
  
  \begin{lemma}
  \label{lem:1}
  Suppose that a subset $Z$ of a free monoid is a finite union of sandwiches. Then $Z$
is decomposable into a finite disjoint union of sandwiches. 
 \end{lemma}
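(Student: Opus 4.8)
The plan is to reduce everything to a single statement about the difference of two sandwiches and then assemble the disjoint decomposition by telescoping. Write $Z = S_1\cup\dots\cup S_k$ with each $S_i$ a sandwich and consider
$$
Z = S_1 \;\sqcup\; (S_2\setminus S_1)\;\sqcup\; (S_3\setminus(S_1\cup S_2))\;\sqcup\;\dots\;\sqcup\;\bigl(S_k\setminus(S_1\cup\dots\cup S_{k-1})\bigr),
$$
whose pieces are pairwise disjoint by construction. Thus it suffices to prove the key claim $(\ast)$: for any two sandwiches $S,T$ of a free monoid, $S\setminus T$ is a finite disjoint union of sandwiches. Granting $(\ast)$, an induction on $r$ shows that $S\setminus(T_1\cup\dots\cup T_r)$ is again a finite disjoint union of sandwiches: having written $S\setminus(T_1\cup\dots\cup T_{r-1})=\bigsqcup_i U_i$, apply $(\ast)$ to each $U_i\setminus T_r$ and note that the resulting sandwiches remain pairwise disjoint (different $i$ because the $U_i$ are disjoint, equal $i$ by $(\ast)$). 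Applying this to each telescoping piece expresses every piece as a finite disjoint union of sandwiches, and since the pieces are mutually disjoint, so is $Z$.

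To prove $(\ast)$, note first that in a free monoid every sandwich $a\la w\ra b$ is either a singleton (when $w=1$) or free (when $w\neq1$, since cancellation makes $aw^mb=aw^nb$ force $m=n$). If $S$ is a singleton then $S\setminus T\in\{S,\emptyset\}$; if $T$ is a singleton then removing at most one point $aw^{n_0}b$ from $S=\{aw^nb\mid n\ge0\}$ splits it as the finite set $\{aw^nb\mid n<n_0\}$ together with the sandwich $aw^{n_0+1}\la w\ra b$. So assume $S=a\la w\ra b$ and $T=c\la v\ra d$ are both free, i.e. $w,v\neq1$. Everything reduces to the index set
$$
N=\{\,n\ge0 \mid aw^nb\in T\,\},
$$
because $S\setminus T=\{aw^nb\mid n\notin N\}$ and $n\mapsto aw^nb$ is injective. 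The goal is to show that $N$ is eventually periodic; then $\Z_{\ge0}\setminus N$ is a finite set together with finitely many residue-class tails $\{n\ge n_1\mid n\equiv r\ (\mathrm{mod}\ q)\}$, whose associated words form, respectively, singletons and sandwiches $aw^{n_1'}\la w^{q}\ra b$, all disjoint by injectivity.

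It remains to prove that $N$ is eventually periodic, which is the main obstacle. A length count comes first: if $aw^nb=cv^md$ then $\len(a)+n\,\len(w)+\len(b)=\len(c)+m\,\len(v)+\len(d)$, which determines $m$ uniquely as an affine function of $n$ and confines the admissible $n$ to a single arithmetic progression $A$ (those $n$ for which the resulting $m$ is a nonnegative integer; possibly $A=\emptyset$, whence $N=\emptyset$). For $n\in A$ one must then decide whether the word equality actually holds, and this is settled by combinatorics on words: for large $n\in A$ the word $aw^nb=cv^md$ has period $\len(w)$ on the block occupied by $w^n$ and period $\len(v)$ on the block occupied by $v^m$, and these two blocks overlap in all but a bounded-length portion of the word. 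By the Fine--Wilf periodicity theorem the overlap is $\gcd(\len(w),\len(v))$-periodic, which forces the primitive roots of $w$ and $v$ to be commensurable (conjugate powers of a common primitive word); otherwise the equality holds for only finitely many $n$ and $N$ is finite. In the commensurable case the two sides differ only by a rigid shift governed by $\lcm(\len(w),\len(v))$, so for large $n$ membership $n\in N$ depends only on the residue of $n$ modulo that period, giving eventual periodicity. This establishes $(\ast)$, and with it the telescoping above yields the lemma.
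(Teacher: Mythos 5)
Your proof is correct, but it is organized differently from the paper's. The paper proceeds by induction on the number $s$ of sandwiches, with the key case $s=2$ treated by analyzing the intersection $I=U\cap U'$ of two sandwiches: if $I$ is infinite, the two-sided infinite words $w^\infty$ and $w'^\infty$ must coincide, and after cyclic conjugation one obtains relations $w^m=w'^n$, $aw^p=a'w'^q$, from which eventual periodicity of membership follows. You instead disjointify up front by telescoping, reduce everything to the single claim that a set difference $S\setminus T$ of two sandwiches is a finite disjoint union of sandwiches, and prove eventual periodicity of the index set $N=\{n \mid aw^nb\in T\}$ via the Fine--Wilf theorem. The combinatorial core is essentially the same --- Fine--Wilf is the standard way to make ``the infinite periodic words coincide, up to cyclic conjugation'' precise --- so what genuinely differs is the decomposition scheme, and here your bookkeeping is the more robust one. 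The paper's inductive step writes $Z = \bigsqcup_{j=1}^N (T_j\cup U_s)$, which is not literally a disjoint union (every term contains $U_s$), so as written it needs exactly the kind of difference-based repair that your telescoping builds in from the start: in your scheme disjointness holds by construction and never has to be re-established. What the paper's route buys is brevity in the pairwise case, since unions, intersections and differences are handled simultaneously; what yours buys is airtight disjointness plus an explicit dichotomy (either the periodic words of $w$ and $v$ are commensurable and $N$ is eventually periodic, or $N$ is finite) in place of the paper's more informal reduction.
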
 

\begin{proof}
Let  $Z = \bigcup_{i=1}^s U_i $ is a decomposition of $Z$ into a union of $s$ sandwiches.

First let first consider the case $s=2$. Let  
$U_1 = U = a \la w \ra b$ and $U_2 = U' = a'\la w' \ra b'$. We will show that 
the sets $U\cap U'$, $U \cup U'$, $U\setminus U'$ and $U'\setminus U$
are decomposed as the finite disjoint union of sandwiches.

If the intersection $I = U\cap U'$ is finite, then it is a disjoint union of singletons $\{u\} = u\la1\ra 1$. Moreover, in this case the set $U\setminus I$ (respectively, $U'\setminus I$) is a union of a finite number of singletons and the subset $aw^m \la w \ra b$  for some $m\ge 0$ (resp., $a'w'^n \la w' \ra b'$ for some $n\ge 0$).
So, $U \cup U'$ admits the desired decomposition.

Suppose now that $I$ is infinite. Then the two-sided infinite words $w^\infty$ and $w'^\infty$ coincide. It is sufficient to prove our claim for the sets $a w^M \la w \ra b$ and $a'w'^N \la w' \ra b'$ for all sufficiently large $M,N$ in place of $U$ and $U'$ respectively. Then up to a cyclic permutation of letters in $w$ and $w'$ (and possible change of the words $a,a',b,b'$), one can assume that there exist $m,n,p,q$ such that $w^m = w'^n$ and $aw^p = a'w'^q$.  

Now, if $T$ is one of the sets $I$ and $U\setminus I$, then 
$T$ is periodic in the following sense: for large enough $t$ we have $aw^t b \in  T \Longleftrightarrow aw^{t\pm m} b \in T $. Then 
$ T = \{ a w^{tm +t_0} b | m\in \Z_+, t_0\in S \}$ where $S$ is some finite set of nonnegative integers. 
 It follows that $T$ is a  disjoint union of a finite collection of sandwiches of the form $a w^{t} \la w^m \ra b$.
 Analogously, the set $U'\setminus I$ is a finite disjoint union of sandwiches of the form $a' w'^{t} \la w'^n \ra b'$.
 So, the set $U \cup U' = I\cup(U'\setminus I) \cup(U'\setminus I)$ admits the desired decomposition as well.
 
 Now, for $s> 2$ we proceed by the induction.
 If $Z' = \bigcup_{i=1}^{s-1} U_i $ is decomposable into a disjoint union 
 $\bigsqcup_{j=1}^N T_j$ with $T_j = p_j \la q_j \ra r_j$, then
 $$
 Z = Z' \cup U_s = \bigsqcup_{j=1}^N (T_j \cup U_s), 
 $$
 where the sets $T_j \cup U_s$ admit the desired decomposition  by the $s=2$ case.
\end{proof}

\begin{rema}
Note that each finitely generated semigroup of linear growth is a finite union of sandwiches~\cite[Theorem~4.2]{hot} (see also~\cite[Proposition~2.174b]{bbl}). However, for monoids with infinite set of defining relations the conclusion of Theorem~\ref{th:1} may fail (so that the union is not disjoint).

For example, consider a monoid 
$$N = \la a,w,b | ba=0,bw=0,wa=0, a^2=0,b^2=0, aw^{t^2}b=0 \mbox{ for } t\ge 0\ra .
$$  
Then the number $c_n$ of nonzero words of length $n\ge 2$ in $N$ is equal to
3 if $n = 2+t^2$ for some $t\ge 0$ and 4 otherwise (these are the words $w^n, aw^{n-1},w^{n-1}b$, and $aw^{n-2}b$). It follows that $N$ cannot be presented as a disjoint  union of 
subsets of the desired form  since the sequence $\{c_n\}_{n\ge 0}$ is not a sum of a finite number of arithmetic progressions.
\end{rema}

If $S$ is a finitely presented monoid of linear growth (not necessary homogeneous), we do not  know whether there it is a finite disjoint union of free sandwiches and singletons. Ufnarovski~\cite[5.10]{ufn} conjectured that 
each finitely presented algebra of linear growth (in particular, the algebra $\F_2 S$) is automaton. This conjecture fails for  homogeneous algebras over some infinite fields and 
holds for homogeneous algebras over finite fields~\cite{p17}. Note that if the algebra $\F_2 S$ is automaton with respect to some ordering of the monomials on a finite set of generators of $S$, then $S$ is a finite disjoint union of sandwiches and singletons by the same arguments as above. 
So, we can formulate a weaker (in a sense) version of Ufnarovski's conjecture.

\begin{conj}
\label{conj:1}
Each finitely presented monoid $S$ of linear growth is a finite disjoint union of free sandwiches and a finite set.
\end{conj}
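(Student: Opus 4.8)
The plan is to follow the homogeneous argument as closely as possible and to isolate precisely the point at which homogeneity was used. As the discussion preceding the conjecture records, it suffices to prove that the semigroup algebra $A=\F_2 S$ is automaton in the sense of Ufnarovski with respect to some ordering of the monomials on a finite generating set of $S$: once the normal words of $A$ form a regular language, the argument of Theorem~\ref{th:1} (via \cite[Corollary~2.3]{p17} and Lemma~\ref{lem:1}, or alternatively via the Paun--Salomaa description of slender languages \cite{slender}) produces the finite disjoint union of free sandwiches and a finite set. Thus Conjecture~\ref{conj:1} is implied by Ufnarovski's conjecture \cite{ufn} restricted to the algebras of the form $\F_2 S$, and the whole problem is to establish the automaton property in the non-homogeneous setting over the finite field.

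First I would try to reduce to the homogeneous case by passing to the associated graded algebra. Filtering $A$ by the length of words makes it a filtered connected algebra, and $\gr A$ is then homogeneous with the same Hilbert series, hence again of linear growth; by \cite[Theorem~3.1]{p17} it would be automaton provided it is finitely presented. The main obstacle of this route is precisely that $\gr A$ need not inherit finite presentation from $A$: the associated graded of a finitely presented algebra can acquire infinitely many new relations. So this approach requires either a proof that $\gr A$ stays finitely presented for linear-growth algebras, or a version of \cite[Theorem~3.1]{p17} valid for graded algebras of linear growth over a finite field without the finite-presentation hypothesis; the latter is essentially Ufnarovski's conjecture for graded algebras and is not available.

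An alternative, more combinatorial route avoids algebras altogether. By \cite[Theorem~4.2]{hot} the semigroup $S$ is already a finite, but generally non-disjoint, union of sandwiches $a_i\la w_i\ra b_i$, $1\le i\le s$; the remaining task is to make this union disjoint and free. Unlike in Lemma~\ref{lem:1}, $S$ is not free, so two expressions $a_iw_i^nb_i$ and $a_jw_j^mb_j$ may coincide in $S$ without any eventual periodicity of the pair $(n,m)$ being visible a priori. The crux is therefore to prove that the coincidence pattern is eventually periodic: I would fix the finite set of defining relations, consider derivations in $S$ connecting two points on the sandwich lines, and argue by a pumping/pigeonhole analysis that, because the growth is linear, such derivations have bounded width and must repeat, forcing the set $\{(n,m): a_iw_i^nb_i=_S a_jw_j^mb_j\}$ to be a finite union of arithmetic progressions. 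Granting this, one splits each sandwich line into finitely many free sub-sandwiches with a common period and removes overlaps exactly as in Lemma~\ref{lem:1}.

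I expect this eventual-periodicity step to be the decisive obstacle, and it is exactly here that finite presentation must enter in an essential way. The monoid $N$ of the Remark, with the non-periodic family of relations $aw^{t^2}b=0$, shows that without finite presentation the coincidence pattern can encode $t^2$ and the conclusion fails; so any successful argument must convert finitely many relations into a genuine pumping lemma for finitely presented monoids of linear growth. The difficulty is that derivations in a monoid are two-sided and need not be length-nonincreasing or confluent, so controlling their length and width is not routine and is, in my view, the real mathematical content of the conjecture.
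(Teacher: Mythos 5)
You have not produced a proof, and indeed no proof exists to compare against: the statement you were given is Conjecture~\ref{conj:1}, which the paper explicitly leaves open (``we do not know whether it is a finite disjoint union of free sandwiches and singletons''). What the paper does say is exactly the reduction you identify: if the algebra $\F_2 S$ is automaton with respect to some ordering of monomials on a finite generating set, then the conclusion follows by the same arguments as in Theorem~\ref{th:1}; this is why the conjecture is presented as a weaker form of Ufnarovski's conjecture, which by \cite{p17} fails for homogeneous algebras over some infinite fields but holds over finite fields in the homogeneous case. Your proposal retraces this reduction accurately, and your diagnosis of the obstacles is sound: the associated graded algebra $\gr A$ of a finitely presented filtered algebra need not be finitely presented, so \cite[Theorem~3.1]{p17} cannot be applied to it directly; and in the purely combinatorial route, starting from the non-disjoint sandwich cover of \cite[Theorem~4.2]{hot}, the missing ingredient is precisely an eventual-periodicity (pumping) statement for coincidences $a_iw_i^nb_i =_S a_jw_j^mb_j$, which the monoid $N$ of the Remark shows must use finite presentation in an essential way.

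So the honest verdict is: your text is a competent survey of why the conjecture is hard and of the two natural attack routes, but both routes are left open in your own account, so nothing is proved. This matches the state of the paper itself. If you want to turn your second route into actual progress, the concrete open step to isolate is this: show that for a finitely presented monoid of linear growth the set $\{(n,m) : a_iw_i^nb_i =_S a_jw_j^mb_j\}$ is a finite union of cosets of subsemigroups of $\Z_{+}^2$ of the form $(n_0,m_0)+\Z_{+}(p,q)$; granting that, the splitting and disjointification would indeed go through exactly as in Lemma~\ref{lem:1}. At present neither you nor the paper has a proof of that periodicity, and claiming otherwise would be a genuine gap.
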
 

Now we can introduce a new invariant for finitely generated monoids. Given such a monoid $S$, let $\gamma(S)$ be the minimal  number $M$ such that $S$ is the disjoint union of  $M$ free sandwiches and a finite set. In particular, for a finite monoid $S$ we have $\gamma(S) = 0$.
If there is no such finite decompositions, we put for $\gamma(S) = \infty$. 
So, Theorem~\ref{th:1} and Conjecture~\ref{conj:1} simply mean that $\gamma(S) < \infty$ if $S$ is a homogeneous (respectively, arbitrary) finitely presented monoid of linear growth.

\begin{prop}
Let  $S$ be a homogeneous monoid 
such that $\gamma(S) = 1$.
Then $S$ is the union of a free monogenic monoid and a finite set. 
\end{prop}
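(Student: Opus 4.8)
The plan is to bypass the automaton machinery used for Theorem~\ref{th:1} and instead exploit homogeneity directly through its weight function. Write the hypothesis $\gamma(S)=1$ as $S = a\la w\ra b \ \sqcup\ E$, where $E$ is finite and the sandwich $a\la w\ra b = \{aw^nb\mid n\ge0\}$ is free, hence infinite. Since $S$ is homogeneous, the weight extends to a well-defined additive map $d\colon S\setminus\{0\}\to\Z_{\ge0}$ with $d(st)=d(s)+d(t)$ whenever $st\neq0$ and $d(s)=0$ iff $s=1$; put $q:=d(w)$, so $q\ge1$ because $w\neq1$. The first step is to record that freeness forces $w$ to be non-nilpotent: if $w^N=0$ then $aw^nb=a\cdot0\cdot b=0$ for every $n\ge N$, so infinitely many elements of the sandwich coincide, contradicting freeness. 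Consequently the powers $w^n$ are pairwise distinct (their degrees $nq$ are distinct), so $\{w^n\mid n\ge0\}$ is a free monogenic submonoid of $S$.

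Next I would analyze the degree set $D=d(S\setminus\{0\})$. Because $E$ is finite and a free sandwich carries at most one element of each degree (the degrees $d(a)+nq+d(b)$ are all distinct), every sufficiently large degree is realized by at most one element of $S$. Since the distinct powers $w^n$ avoid the finite set $E$ for all large $n$, each such $w^n$ must lie in the sandwich, say $w^n=aw^{\phi(n)}b$; comparing degrees gives $nq=d(a)+\phi(n)q+d(b)$, whence $q\mid d(a)+d(b)$. Therefore every sandwich element has degree divisible by $q$, and together with the fact that $w^n$ realizes every large multiple of $q$ this pins down $D$ to coincide with $q\Z_{\ge0}$ in all large degrees, with exactly one element of $S$ per such degree.

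The heart of the argument is then the identification of these asymptotically unique elements with powers of $w$. For large $n$ the element $w^n$ has degree $nq$ and is the unique element of that degree; hence any $s\in S$ of sufficiently large degree $N$ is nonzero, lies in the sandwich, satisfies $N=nq$ for some $n$ (as sandwich degrees are multiples of $q$), and therefore equals $w^n\in\{w^m\mid m\ge0\}$. It follows that $S\setminus\{w^m\mid m\ge0\}$ consists only of $0$ (if present) and elements of bounded degree; since each degree supports finitely many elements, this remainder is finite. This yields $S=\{w^m\mid m\ge0\}\cup F$ with $F$ finite and $\{w^m\mid m\ge0\}$ a free monogenic monoid, which is the desired conclusion.

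I expect the main obstacle to lie in two places that must be handled in tandem: the possible presence of a zero element, which means the weight $d$ is only defined on $S\setminus\{0\}$ and must be tracked separately, and the verification that no residue class other than $0$ modulo $q$ survives among large degrees. The latter is precisely the point where non-nilpotence of $w$ (guaranteeing infinitely many distinct powers) and the one-element-per-large-degree bound (coming from finiteness of $E$ and freeness of the sandwich) are combined to force $q\mid d(a)+d(b)$ and thereby collapse the entire large-degree part of $S$ onto the single monogenic monoid $\{w^m\mid m\ge0\}$.
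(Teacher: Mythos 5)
Your proof is correct and takes essentially the same approach as the paper's: counting elements of $S$ per degree, using non-nilpotence of $w$ to show that all large multiples of $d(w)$ are realized by powers of $w$, deducing $d(w)\mid d(a)+d(b)$, and concluding that every element of sufficiently large degree coincides with a power of $w$. The only difference is that you spell out steps the paper merely asserts (why $w^t\neq 0$, and why large powers of $w$ must land in the sandwich).
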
 

Note that the above monoid $M$ (which is homogeneous of linear growth) with $\gamma(M) = 2$  cannot be decomposed into a finite union of monogenic semigroups and a finite set (again because $M$
contains an infinite subset $1 \la y \ra x$ of nilpotent elements). 

\begin{proof}
Let $S$ be the disjoint union of a finite set $Y$ and a free sandwich $Z = a\la w \ra b$. 
For $m>>0$, the set $S_m$ of elements  of the degree $m$ in $S$ is either the singleton $\{a w^k b\}$ (if $k = (m-d(a)-d(b))/d(w)$ is integer) or empty. Since the element $w^t$ is nonzero for all $t\ge 0$,  for $m = t d(w)$ with $t>>0$ this set $S_m$ contains $w^t$.
So, $S_{td(w)}$ is non-empty for all $t>>0$, so that $d(a)+d(b) = s d(w) $
for some integer $s$. We conclude that for each $m >>0$ the set $S_m$ is non-empty if and only if $m-sd(w)/d(w)$ is an integer, or   $m = t d(w)$ for some integer $t$. In the last case, we have 
$S_m = \{ w^{t} \}$, so that $S$ is the union of the free monogenic monoid  $\la w \ra$ and a finite set.
\end{proof}

\subsection*{Acknowledgement}

I am grateful to Jan Okninski for fruitful discussions.

\end{document}